\definecolor{darkgreen}{rgb}{0.,0.5,0.}
 \newtheorem{thm}{Theorem}[section]
 \newtheorem{cor}[thm]{Corollary}
 \newtheorem{lem}[thm]{Lemma}
\title{The number of simultaneous core partitions}
\author{Huan Xiong}
\address{I-Math, Universit$\ddot{a}$t
Z$\ddot{u}$rich, Winterthurerstrasse 190, Z$\ddot{u}$rich 8057,
Switzerland}
\email{huan.xiong@math.uzh.ch}
\subjclass{05A17, 11P81}
\keywords{partition, hook length, $\beta$-set, $t$-core}
\begin{document}
\begin{abstract}
Amdeberhan conjectured that the number of $(t,t+1, t+2)$-core
partitions is $\sum_{0\leq k\leq [\frac{t}{2}]
}\frac{1}{k+1}\binom{t}{2k}\binom{2k}{k}$. In this paper, we obtain
the generating function of the  numbers $f_t$ of $(t, t + 1, \ldots,
t + p)$-core partitions. In particular, this verifies that
Amdeberhan's conjecture is true. We also prove that the number of
$(t_1,t_2,\ldots, t_m)$-core partitions is finite if and only if
gcd$(t_1,t_2,\ldots, t_m)=1,$ which extends Anderson's result on the
finiteness of the number of $(t_1,t_2)$-core partitions for coprime
positive integers  $t_1$ and $t_2$ and thus rediscover a result of
Keith and Nath with a different proof.
\end{abstract}

\maketitle

\section{Introduction} Partitions  of positive integers are widely studied in number theory
and combinatorics. As we know, a \emph{partition} of  a positive
integer $n$ is a finite non increasing sequence of positive integers
$\lambda = (\lambda_1, \lambda_2, \ldots, \lambda_r)$ with
$\sum_{1\leq i\leq r}\lambda_i=n$. In this case, $n$ is called the
\emph{size} of $\lambda$, which is also be denoted by $\mid \lambda
\mid$. We can associate a partition $\lambda$ with its \emph{Young
diagram}, which is an array  of boxes arranged in left-justified
rows with $\lambda_i$ boxes in the $i$-th row. To the $(i,j)$-box of
the Young diagram, let $h(i, j)$ be its \emph{hook length}, which is
the number of boxes directly to the right, directly below, or the
box itself. Let $t$ be a positive integer.  A partition $\lambda$ is
called a \emph{$t$-core partition} if none of its hook lengths is a
multiple of $t$. Finally, we say that $\lambda$ is  a
\emph{$(t_1,t_2,\ldots, t_m)$}-core partition if it is
simultaneously  a $t_1$-core, a $t_2$-core, $\ldots$, a $t_m$-core
partition. For instance, Figure $1$ shows the Young diagram and hook
lengths of the partition $(5,2,2)$. It is easy to see that, the
partition $(5,2,2)$ is a $(4, 5)$-core partition since non of its
hook lengths is divisible by $4$ or $5$.

\begin{figure}[htbp]
\begin{center}
\Yvcentermath1

\begin{tabular}{c}
$\young(76321,32,21)$

\end{tabular}

\end{center}
\caption{The Young diagram and hook lengths of the partition
$(5,2,2)$.}
\end{figure}

 For  $t$-core partitions,  Granville and Ono \cite{gran} proved that there
always exists a $t$-core partition with size $n$  for any $t\geq 4$
and $n\geq 1$. A very important result in the study of
$(t_1,t_2,\ldots, t_m)$-core partitions was given by Anderson
\cite{and}, that is, there are only finite $(t_1,t_2)$-core
partitions when $t_1$ and $t_2$ are coprime to each other. Actually,
Anderson showed that the number of $(t_1,t_2)$-core partitions is
exactly $\frac{1}{t_1+t_2} \binom{t_1+t_2}{t_1}$ for relatively
prime positive integers $t_1$ and $t_2$.  Anderson's beautiful
result attracts much attention and motives a lot of work in the
study of simultaneous core partitions. Stanley and Zanello
\cite{stanley} showed that the average size of a $(t,t+1)$-core
partition is $\binom{t+1}{3}/2.$ In $2007$, Olsson and Stanton
\cite{ols} proved that the largest size of $(t_1,t_2)$-core
partitions is $ \frac{({t_1}^2-1)({t_2}^2-1)}{24}$ when $t_1$ and
$t_2$ are coprime to each other. Ford, Mai, and Sze \cite{ford}
showed that the number of self-conjugate $(t_1,t_2)$-core partitions
is $\binom{[\frac{t_1}{2}]+[\frac{t_2}{2}]}{[\frac{t_1}{2}]}$ for
relatively prime positive integers $t_1$ and $t_2$, where $[ x ]$
denotes the largest integer not greater than $x$.

Anderson \cite{and} proved  the finiteness of the number of
$(t_1,t_2)$-core partitions for coprime positive integers  $t_1$ and
$t_2$. We will extend Anderson's this result to a more general case
and thus rediscover Theorem $1$ in \cite{KN} with a different proof:
\begin{thm} \label{main3}
The number of $(t_1,t_2,\ldots, t_m)$-core partitions is finite if
and only if gcd$(t_1,t_2,\ldots, t_m)=1,$ where gcd$(t_1,t_2,\ldots,
t_m)$ denotes the greatest common divisor of $t_1,t_2,\ldots, t_m.$
\end{thm}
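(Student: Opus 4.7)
The plan is to handle the two directions separately. For the ``only if'' direction, suppose $d := \gcd(t_1, \ldots, t_m) \geq 2$. Since each $t_i$ is a multiple of $d$, any hook length divisible by some $t_i$ is divisible by $d$, so every $d$-core is automatically a $(t_1, t_2, \ldots, t_m)$-core. It therefore suffices to exhibit infinitely many $d$-cores, which can be done explicitly: for each $n \geq 1$, the partition whose set of first-column hook lengths is $\{d-1,\ 2d-1,\ \ldots,\ nd-1\}$ is a $d$-core, since this set is closed under subtracting $d$ as long as the result remains positive. Distinct values of $n$ give partitions of distinct sizes, so there are infinitely many $(t_1,\ldots,t_m)$-cores.

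For the ``if'' direction, assume $\gcd(t_1, \ldots, t_m) = 1$ and let $S = \{c_1 t_1 + \cdots + c_m t_m : c_i \in \mathbb{Z}_{\geq 0}\}$ be the numerical semigroup generated by $t_1,\ldots,t_m$. The strategy rests on the following key lemma: if $\lambda$ is a $(t_1, t_2, \ldots, t_m)$-core, then no hook length of $\lambda$ lies in $S \setminus \{0\}$. Granting this lemma, finiteness follows quickly: since $\gcd=1$, $S$ has finite complement in $\mathbb{Z}_{\geq 1}$ (with all omissions bounded by the Frobenius number $F$), so every hook length of $\lambda$ is at most $F$. In particular the first-column hook length set $\beta(\lambda)$ is a subset of $\{1, 2, \ldots, F\}$, and since a partition is determined by its set of first-column hook lengths, there are at most $2^F$ such $\lambda$.

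The main content is the lemma, which I would prove by minimal counterexample. Let $h$ be the smallest hook length of $\lambda$ lying in $S \setminus \{0\}$, and fix a representation $h = c_1 t_1 + c_2 t_2 + \cdots + c_m t_m$ with $\sum c_i \geq 1$. If $\sum c_i = 1$, then $h = t_j$ for some $j$, directly contradicting the $t_j$-core property of $\lambda$. If $\sum c_i \geq 2$, pick any index $j$ with $c_j \geq 1$; then $h > t_j$ (since the remaining terms contribute a positive amount), and the sub-claim below produces a hook length $h - t_j > 0$ of $\lambda$. This $h - t_j$ admits the representation $c_1 t_1 + \cdots + (c_j - 1) t_j + \cdots + c_m t_m$ of total weight $\sum c_i - 1 \geq 1$, so it lies in $S \setminus \{0\}$; but it is strictly smaller than $h$, contradicting minimality.

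The sub-claim is: if $\lambda$ is a $t$-core and $h > t$ is a hook length of $\lambda$, then $h - t$ is also a hook length. This is standard via the $\beta$-set description: every hook of $\lambda$ corresponds to a pair $(b, a)$ with $b \in \beta(\lambda)$, $a \in \mathbb{Z}_{\geq 0} \setminus \beta(\lambda)$, and hook length $b - a$. A hook of length $h$ yields such a pair with $b = h + a \geq h > t$; the $t$-core condition then forces $b - t \in \beta(\lambda)$, so the pair $(b - t, a)$ witnesses a hook of length $h - t > 0$. The only step that needs care is invoking the right $\beta$-set conventions and the accompanying closure characterization of $t$-cores; both are classical, and I anticipate no serious obstacle beyond assembling these pieces cleanly.
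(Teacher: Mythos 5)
Your proposal is correct and takes essentially the same route as the paper: for $\gcd>1$ you produce an infinite family of cores via arithmetic-progression $\beta$-sets, and for $\gcd=1$ you show that hook lengths avoid the numerical semigroup generated by $t_1,\ldots,t_m$ by a descent that subtracts generators, then use cofiniteness of that semigroup to bound $\beta(\lambda)$ — this is exactly the paper's lemma that $\sum_i a_i t_i\notin\beta(\lambda)$ combined with its explicit bound, with the Frobenius number playing the role of $(t_1-1)\sum_{i\geq 2}t_i$. The only substantive difference is that you prove the avoidance statement for all hook lengths, which obliges you to invoke the classical correspondence between hooks and pairs $(b,a)$ with $b\in\beta(\lambda)$, $a\notin\beta(\lambda)$, whereas the paper needs it only for the first-column hooks, where the descent follows immediately from the $t$-core closure property of $\beta$-sets.
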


For the number of  $(t,t+1, t+2)$-core partitions, Amdeberhan
\cite{tamd} gave the following conjecture, which we will  prove in
Section \textbf{$3$}:

\begin{thm} \label{main2}
(Cf. Conjecture 11.1 of  \cite{tamd}.)   The number $f_t$ of $(t, t
+ 1, t + 2)$-core partitions is the $t-$th Motzkin number
$\sum_{0\leq k\leq [\frac{t}{2}]
}\frac{1}{k+1}\binom{t}{2k}\binom{2k}{k}$. The generating function
of $f_t$ is
$$\sum \limits_{t\geq 0}f_tx^t= \frac{1-x-\sqrt{1-2x-3x^2}}{2x^2}.$$
\end{thm}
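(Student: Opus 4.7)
The plan is to identify $(t,t+1,t+2)$-core partitions with the order ideals of an explicit finite poset and then to prove the Motzkin recurrence $f_t=f_{t-1}+\sum_{k=0}^{t-2}f_k f_{t-2-k}$ by a clean decomposition of those ideals. First I would invoke the first-column form of the hook-length characterization: $\lambda$ is a $t$-core iff the set $H(\lambda)$ of first-column hook lengths of $\lambda$ avoids every multiple of $t$ and is closed under $h\mapsto h-t$ for $h>t$. Demanding this simultaneously for $t$, $t+1$ and $t+2$ forces $H(\lambda)$ to lie in the set of gaps of the numerical semigroup $\Gamma_t=\langle t,t+1,t+2\rangle$ and to be an order ideal of the poset $P_t$ on this gap set, ordered by $g\le g'\iff g'-g\in\Gamma_t$. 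Since any finite subset of $\mathbb Z_{\ge1}$ is the first-column hook set of a unique partition, $\lambda\mapsto H(\lambda)$ is a bijection between $(t,t+1,t+2)$-cores and order ideals of $P_t$; in particular $f_t=|\mathcal J(P_t)|$.

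Next I would describe $P_t$ explicitly. A short computation shows $qt+r\in\Gamma_t$ (with $0\le r<t$) iff $r\le 2q$, so the gaps are parametrised by the triangle $T_t=\{(i,j):i,j\ge 0,\ 2i+j\le t-2\}$ via $g(i,j)=i(t+2)+j+1$. In the shifted coordinate $j'=j+i$, the cover relations of $P_t$ take the ``Motzkin-like'' form $(i,j')\lessdot(i+1,j'-1),(i+1,j'),(i+1,j'+1)$ whenever the target remains in $T_t$. I then split $\mathcal J(P_t)$ according to whether the element $(0,t-2)$, corresponding to the hook length $t-1$, lies in $I$. In \emph{Case A}, $(0,t-2)\notin I$: its filter is the single diagonal chain $\{(i,t-2-2i):i\ge 0\}$, and deleting this chain from $P_t$ yields a poset canonically isomorphic to $P_{t-1}$, so this case contributes $f_{t-1}$.

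In \emph{Case B}, $(0,t-2)\in I$: let $m\in\{0,1,\dots,t-2\}$ be the smallest index with $(0,m),(0,m+1),\dots,(0,t-2)\in I$. Define the subsets $L_m=\{(i,j'):i\le j'\le m-2-i\}$ and $R_m=\{(i,j'):m+i\le j'\le t-2-i\}$. I would verify: (a) $L_m\cong P_m$ and $R_m\cong P_{t-m}$ as induced subposets; (b) the complement $P_t\setminus(L_m\cup R_m)$ coincides with the filter of $(0,m-1)$ in $P_t$, hence is disjoint from $I$; (c) no element of $L_m$ is comparable in $P_t$ to any element of $R_m$, so $I\cap L_m$ and $I\cap R_m$ may be chosen independently; (d) by definition of $m$, the bottom row of $R_m$ is contained in $I$, so $I\cap R_m$ corresponds to an order ideal of $R_m^{\ge 1}\cong P_{t-m-2}$. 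This produces $f_m\cdot f_{t-m-2}$ ideals for each $m$, so Case B contributes $\sum_{m=0}^{t-2}f_m f_{t-m-2}$.

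Summing the two cases, and checking the initial values $f_0=f_1=1$, gives the Motzkin recurrence, which identifies $f_t$ with the $t$-th Motzkin number. Passing to the generating function, the recurrence becomes $f=1+xf+x^2f^2$; the branch of this quadratic with $f(0)=1$ is $(1-x-\sqrt{1-2x-3x^2})/(2x^2)$, and the closed formula $f_t=\sum_k\frac{1}{k+1}\binom{t}{2k}\binom{2k}{k}$ follows from the standard binomial expansion of the radical. The main obstacle is Case B, parts (b) and (c): confirming that $P_t\setminus(L_m\cup R_m)$ (whose row-$i$ width grows like $2i+1$) is exactly the filter of $(0,m-1)$, and that no order relation of $P_t$ connects $L_m$ to $R_m$. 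Once this combined forcing-and-disconnection statement is in place, the product structure behind the convolution term is automatic and the whole recurrence drops out.
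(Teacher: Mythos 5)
Your argument is correct and is essentially the paper's: Lemmas~\ref{set1} and~\ref{num} are precisely your identification of $(t,t+1,t+2)$-cores with order ideals (``good subsets'') of the gap poset of $\langle t,t+1,t+2\rangle$, and the proof of Theorem~\ref{main} (at $p=2$) obtains the same Motzkin recurrence by deleting the filter of a distinguished absent bottom-row element and splitting what remains into two independent copies of smaller gap posets. The only differences are cosmetic: you condition on the largest element of $\{1,\ldots,t-1\}$ missing from $\beta(\lambda)$ (equivalently, on whether $t-1\in\beta(\lambda)$), a mirror image of the paper's conditioning on the smallest positive integer absent from $\beta(\lambda)$, and you argue directly at $p=2$ whereas the paper proves the general $(t,t+1,\ldots,t+p)$ recurrence and specializes.
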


\section{Proof of Theorem \ref{main3}}

Suppose that $\lambda_1 \geq \lambda_2\geq \cdots\geq \lambda_r\geq
1$ and $\lambda = (\lambda_1, \lambda_2, \ldots, \lambda_r)$ is a
partition. The \emph{$\beta$-set} of $\lambda$ is
 denoted by
$$\beta(\lambda)=\{\lambda_i+r-i : 1 \leq i \leq r\}.$$ It is obvious that $0\notin\beta(\lambda).$ Actually $\beta(\lambda)$
 is just the set of
hook lengths of boxes in the first column of the corresponding Young
diagram. It is easy to see that a partition $\lambda$ is uniquely
determined by its $\beta$-set $\beta(\lambda)$.  The following is a
well-known result on  $\beta$-sets of $t$-core partitions.






\begin{lem} \label{a-t} (\cite{jk}.)
A partition $\lambda$ is a $t$-core partition if and only if for any
$x\in \beta(\lambda)$  such that $x\geq t$, we have $x-t \in
\beta(\lambda)$.
\end{lem}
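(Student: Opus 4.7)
The plan is to reduce Lemma \ref{a-t} to the classical formula expressing the multiset of hook lengths of $\lambda$ directly in terms of its $\beta$-set. Writing $\beta(\lambda)=\{\beta_1>\beta_2>\cdots>\beta_r\}$ with $\beta_i=\lambda_i+r-i$, the key identity is the equality of multisets
\[
\{h(i,j):(i,j)\in\lambda\}=\{\,x-y : x\in\beta(\lambda),\ y\geq 0,\ y\notin\beta(\lambda),\ y<x\,\}.
\]
The underlying bijection sends a box $(i,j)$ to the pair $(\beta_i,y)$, where $y$ is the $j$-th largest element of $\{0,1,\ldots,\beta_i-1\}\setminus\beta(\lambda)$. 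A routine count confirms that this set has cardinality $\lambda_i$, since its $r-i$ members of $\beta(\lambda)$ are precisely $\beta_{i+1},\ldots,\beta_r$, leaving $\beta_i-(r-i)=\lambda_i$ non-members; comparing with the arm--leg decomposition then yields $\beta_i-y=h(i,j)$.

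With this formula in place the forward direction is immediate. If $\lambda$ is a $t$-core and some $x\in\beta(\lambda)$ with $x\geq t$ had $x-t\notin\beta(\lambda)$, then the admissible pair $(x,x-t)$ would contribute the forbidden hook length $t$, a contradiction. For the converse, assume the stated closure condition on $\beta(\lambda)$ and suppose, toward a contradiction, that $\lambda$ admits a hook length $h=kt$ with $k\geq 1$. Writing $h=x-y$ with $x\in\beta(\lambda)$, $y\geq 0$, $y\notin\beta(\lambda)$, and $y<x$, we obtain $x=y+kt\geq t$; applying the hypothesis $k$ successive times, where each intermediate value $y+jt$ with $j\geq 1$ still satisfies $y+jt\geq t$, forces $x-t,\,x-2t,\,\ldots,\,x-kt=y$ all to lie in $\beta(\lambda)$, contradicting $y\notin\beta(\lambda)$.

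The only nontrivial input is the auxiliary hook-length formula via $\beta$-sets; the deduction of the lemma itself is a short bookkeeping argument. The main obstacle, if one wanted to be fully self-contained, would be to verify the bijection above rigorously, but it is standard material from the James--Kerber theory recorded in \cite{jk}; equivalently, it can be read off from the abacus display by interpreting hook lengths as horizontal bead moves on a one-runner abacus. Once granted, both directions of the equivalence dispatch in a few lines as sketched above.
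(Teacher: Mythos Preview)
The paper does not prove this lemma at all; it is stated with a citation to James--Kerber \cite{jk} and used as a black box. Your argument is correct and is essentially the standard proof: once one has the identity
\[
\{h(i,j):(i,j)\in\lambda\}=\{\,x-y : x\in\beta(\lambda),\ 0\le y<x,\ y\notin\beta(\lambda)\,\},
\]
the forward direction is immediate (the pair $(x,x-t)$ would produce the hook length $t$), and the converse follows by iterating the closure hypothesis down from $x$ to $x-kt=y$. The only ingredient you import is the hook-length/$\beta$-set formula itself, which is precisely the content of the James--Kerber reference the paper invokes, so your level of detail is already strictly greater than what the paper provides.
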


By Lemma \ref{a-t}, we can easily deduce the following result:
\begin{lem} \label{linear}
Let $\lambda$ be a $(t_1,t_2,\ldots, t_m)$-core partition and $a_i$
be some non negative  integers. Then $\sum_{1\leq i\leq
m}a_it_i\notin \beta(\lambda).$
\end{lem}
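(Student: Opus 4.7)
The plan is to prove Lemma \ref{linear} by induction on the coefficient sum $S = \sum_{1 \leq i \leq m} a_i$, using Lemma \ref{a-t} as the engine. The statement says that every non-negative integer combination of the $t_i$'s is excluded from $\beta(\lambda)$, and the $\beta$-set condition in Lemma \ref{a-t} naturally supports this kind of downward-propagating argument.

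For the base case $S = 0$, the sum $\sum a_i t_i$ equals $0$, and since $0 \notin \beta(\lambda)$ by the very definition recalled just before Lemma \ref{a-t}, there is nothing to check. For the inductive step, assume $S \geq 1$ and that the conclusion holds for every non-negative combination with coefficient sum strictly less than $S$. Let $N = \sum_{i} a_i t_i$ and suppose, for contradiction, that $N \in \beta(\lambda)$. Choose any index $j$ with $a_j \geq 1$; then $N \geq a_j t_j \geq t_j$. Since $\lambda$ is a $t_j$-core, Lemma \ref{a-t} forces $N - t_j \in \beta(\lambda)$.

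The contradiction comes from writing $N - t_j = (a_j - 1) t_j + \sum_{i \neq j} a_i t_i$, which is again a non-negative integer combination of $t_1, \ldots, t_m$, but now with coefficient sum $S - 1$. By the inductive hypothesis, $N - t_j \notin \beta(\lambda)$, contradicting what we just deduced. Hence $N \notin \beta(\lambda)$, completing the induction.

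I do not foresee a genuine obstacle here: the argument is essentially a one-line application of Lemma \ref{a-t} iterated across the various moduli, and the only mild point to flag explicitly is that $N \geq t_j$ whenever $a_j \geq 1$, which is needed to invoke Lemma \ref{a-t}. The induction could equally well be carried out on $N$ itself rather than on $S$, but phrasing it via the coefficient sum keeps the decrement completely transparent and avoids having to separately handle combinations that might coincidentally equal smaller admissible values.
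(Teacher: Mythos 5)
Your proof is correct and in substance identical to the paper's: both arguments simply iterate Lemma \ref{a-t} to strip off one $t_i$ at a time until reaching $0\notin\beta(\lambda)$. The only difference is bookkeeping --- the paper inducts on the number $m$ of moduli (compressing the repeated subtraction of $t_m$ into a single step), whereas you induct on the coefficient sum $\sum_i a_i$, which makes each step one explicit application of Lemma \ref{a-t}; either organization is fine.
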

\begin{proof} We will prove this result by induction on $m$. If
$m=1$, by Lemma \ref{a-t} we know $a_1t_1\notin \beta(\lambda)$
since $0\notin \beta(\lambda)$. Now we assume that $m\geq 2$ and the
result is true for $m-1$, i.e., $\sum_{1\leq i\leq m-1}a_it_i\notin
\beta(\lambda)$ if  $a_i$ are some non negative  integers. Then by
Lemma \ref{a-t} we know $\sum_{1\leq i\leq
m}a_it_i=a_mt_m+\sum_{1\leq i\leq m-1}a_it_i\notin \beta(\lambda)$.
 \end{proof}

\vspace{1ex}

 Now we can prove Theorem \ref{main3}.

\noindent\textbf{Proof of Theorem \ref{main3}.} $\Rightarrow$:
Suppose that gcd$(t_1,t_2,\ldots, t_m)=d>1.$ For every  $n\in
\textbf{N}$, let $\lambda_n$ be the partition whose $\beta$-set is
$$\beta(\lambda_n)=\{1,1+d,1+2d,\ldots, 1+nd\}.$$ Then for any $1\leq
i\leq m$ and $0\leq j\leq n$  such that $1+jd\geq t_i$, we have
$1+jd-t_i=1+j'd \in \beta(\lambda_n)$ for some non negative integer
$j'$ since $d\mid t_i$ and $d>1$. Then by Lemma \ref{a-t},
$\lambda_n$ is a $(t_1,t_2,\ldots, t_m)$-core partition for every
$n\in \textbf{N}$. This means that the number of $(t_1,t_2,\ldots,
t_m)$-core partitions is infinite.

$\Leftarrow$: Suppose that gcd$(t_1,t_2,\ldots, t_m)=1$ and $1\leq
t_1<t_2<\cdots< t_m$. To show that the number of $(t_1,t_2,\ldots,
t_m)$-core partitions is finite, we just need to show that for every
$(t_1,t_2,\ldots, t_m)$-core partition $\lambda$ and
$x\geq(t_1-1)\sum_{2\leq i\leq m}t_i$, we have $x\notin
\beta(\lambda)$:

 First we know there
exist some  $a_i\in \textbf{Z}$ such that $x=\sum_{1\leq i\leq
m}a_it_i $ since gcd$(t_1,t_2,\ldots, t_m)=1$. Furthermore, we can
assume that $0\leq a_i\leq t_1-1$ for $2\leq i\leq m$ since
$$a_1t_1+a_it_i= (a_1-bt_i)t_1+(a_i+bt_1)t_i
$$ for every $b\in \textbf{Z}$. Now we have $$\sum \limits_{1\leq i\leq m}a_it_i
=x\geq(t_1-1)\sum \limits_{2\leq i\leq m}t_i
$$ and $0\leq a_i\leq t_1-1$ for
$2\leq i\leq m$. It follows that $$a_1t_1=x-\sum \limits_{2\leq
i\leq m}a_it_i\geq x-(t_1-1)\sum \limits_{2\leq i\leq m}t_i\geq0.$$
Thus we know $a_1\geq 0$. Then by Lemma \ref{linear}, we have
$$x=\sum \limits_{1\leq i\leq m}a_it_i \notin\beta(\lambda).$$  This means that $x\notin \beta(\lambda)$ if
$\lambda$ is a $(t_1,t_2,\ldots, t_m)$-core partition and
$x\geq(t_1-1)\sum_{2\leq i\leq m}t_i$. Now we know for a
$(t_1,t_2,\ldots, t_m)$-core partition $\lambda$, its $\beta$-set
$\beta(\lambda)$ must be a subset of $\{1,2,\ldots,
(t_1-1)\sum_{2\leq i\leq m}t_i-1 \}$. This implies that the number
of $(t_1,t_2,\ldots, t_m)$-core partitions must be finite. \hfill
$\square$

\section{Main results}
 Throughout this section, let $p$ be a given positive integer.

 Let
$S_{t,i}=\{  x\in \textbf{Z} : (i-1)(t+p)+1\leq x \leq  it-1 \}.$
The following result is a characterization of $\beta$-sets of
$(t,t+1,\ldots, t+p)$-core partitions.

\begin{lem} \label{set1}
Suppose that $\lambda$ is a $(t,t+1,\ldots, t+p)$-core partition.
Then
$$\beta(\lambda)\subseteq  \bigcup_{1\leq i \leq [\frac{t+p-2}{p}]}
S_{t,i}.$$
\end{lem}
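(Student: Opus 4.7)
My plan is to combine Lemma~\ref{linear} with a simple covering argument. Set $k := [(t+p-2)/p]$. I would show that every positive integer $x$ not lying in $\bigcup_{1\leq i\leq k} S_{t,i}$ can be written as $x = \sum_j a_j t_j$ with the $a_j$ non-negative integers, each $t_j \in \{t, t+1, \ldots, t+p\}$, and not all $a_j$ zero. Since $0 \notin \beta(\lambda)$, Lemma~\ref{linear} will then force $\beta(\lambda) \subseteq \bigcup_{i=1}^k S_{t,i}$.

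To make this precise I would first identify the complement. Because $S_{t,i}$ is the integer interval $[(i-1)(t+p)+1,\, it-1]$, the positive integers missing from $\bigcup_{i=1}^k S_{t,i}$ break up as $\bigcup_{i\geq 1} G_i$, where $G_i := [it,\, i(t+p)]$ is the gap between $S_{t,i}$ and $S_{t,i+1}$. The defining inequality of the floor function gives $kp \geq t-1$, which is exactly the statement that $(i+1)t \leq i(t+p)+1$ whenever $i \geq k$; hence $G_i$ and $G_{i+1}$ overlap or abut for all such $i$, and $\bigcup_{i\geq k} G_i = [kt, \infty)$. Consequently every positive integer outside $\bigcup_{i=1}^k S_{t,i}$ falls into some $G_i$ with $i \geq 1$.

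Next I would explicitly write each $x \in G_i$ as a sum of $i$ parts from $\{t, \ldots, t+p\}$. Setting $q := \lfloor x/i \rfloor$ and $r := x - iq \in \{0, 1, \ldots, i-1\}$, the inclusion $x \in [it, i(t+p)]$ forces $q \geq t$ and, when $r > 0$, also $q+1 \leq t+p$; taking $i-r$ copies of $q$ together with $r$ copies of $q+1$ then expresses $x$ as required. Lemma~\ref{linear} immediately yields $x \notin \beta(\lambda)$, from which the claimed inclusion follows.

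The only step needing a small amount of care is the passage from the formal definition $k = [(t+p-2)/p]$ to the covering estimate $kp \geq t-1$; this is what guarantees that the tail $[kt,\infty)$ is actually covered by the $G_i$ rather than leaking into a would-be $S_{t,i}$ beyond the stated index range. Everything else is a routine interval decomposition.
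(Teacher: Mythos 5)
Your argument is correct and is essentially the paper's own proof: the gaps $G_i=[it,i(t+p)]$ you cover are exactly the sets $T_{t,i}$ of sums of $i$ parts from $\{t,\dots,t+p\}$ used in the paper, and your inequality $kp\geq t-1$ is the same coalescence estimate $(i+1)t-1\leq i(t+p)$ that makes the tail $[kt,\infty)$ forbidden. The only difference is that you spell out, via the quotient-and-remainder construction, the identification $T_{t,i}=[it,i(t+p)]$ that the paper calls easy to see.
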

\begin{proof} By Lemma \ref{linear}, we have $\sum_{0\leq k\leq
p}a_k(t+k)\notin \beta(\lambda )$ for non negative integers $a_k.$
Let
$$T_{t,i}=\{\sum \limits_{0\leq k \leq p}{a_k(t+k)}: a_k\in
\textbf{Z},\ a_k\geq 0,\ \sum \limits_{0\leq k \leq p}{a_k}=i\}. $$
Then $T_{t,i}\bigcap \beta(\lambda )=\emptyset$ for $i\geq 0.$ It is
easy too see that
$$T_{t,i}= \{  x\in \textbf{Z}: it\leq x \leq i(t+p) \}$$ and
$$\bigcup_{i \geq [\frac{t+p-2}{p}]} T_{t,i}=\{ x\in \textbf{Z}:
x\geq [\frac{t+p-2}{p}]t   \}$$ since $(i+1)t-1\leq it+
[\frac{t+p-2}{p}]p\leq i(t+p)$ for $i\geq [\frac{t+p-2}{p}]$. Thus
$\beta(\lambda)$ must be a subset of $$\{
 x\in \textbf{Z} :1\leq x\leq [\frac{t+p-2}{p}]t-1 \} \setminus (\mathop{\bigcup}_{1\leq
 i
\leq [\frac{t+p-2}{p}]-1} {\{x\in \textbf{Z}: it\leq x \leq i(t+p)
\}}),$$ which equals to $\bigcup_{1\leq i \leq [\frac{t+p-2}{p}]}
S_{t,i}$. \end{proof}

\vspace{1ex}

 We can define a partial order relation on $\bigcup_{1\leq i
\leq [\frac{t+p-2}{p}]} S_{t,i}$. That is, for every $x,y\in
\bigcup_{1\leq i \leq [\frac{t+p-2}{p}]} S_{t,i}$, we define
$y\preceq x$ if and only if $x-y=\sum_{0\leq k\leq p}a_k( t+k)$ for
some non negative integers $a_k.$ It is easy to verify that
$\preceq$ is indeed a partial order relation. We say that a subset
$S$ of a partially ordered set $T$ is \emph{good}
 if for every $x\in S$, $y\in T$ such that $y\preceq x$ in $T$, we always
 have $y\in S$.

By the definition of $S_{t,i}$, It is easy to see that
$$S_{t,i}=\{x-(t+k):x\in S_{t,i+1},\ 0\leq k\leq p\}$$ for $1\leq
i\leq [\frac{t+p-2}{p}]-1$. Then by Lemma \ref{a-t} and Lemma
\ref{set1} the following result is obvious:

\begin{lem} \label{num}
A partition $\lambda$ is a $(t, t + 1, \ldots, t + p)$-core
partition if and only if $\beta(\lambda)$ is a good subset of $\
\bigcup_{1\leq i \leq [\frac{t+p-2}{p}]} S_{t,i}.$
\end{lem}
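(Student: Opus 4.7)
The plan is to prove both directions by combining Lemma \ref{a-t} and Lemma \ref{set1} with the recursive identity $S_{t,i}=\{x-(t+k):x\in S_{t,i+1},\ 0\leq k\leq p\}$ noted just before the lemma statement.

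For the direction $(\Rightarrow)$, Lemma \ref{set1} already yields $\beta(\lambda)\subseteq \bigcup_i S_{t,i}$, so only goodness remains to be verified. I would fix $x\in \beta(\lambda)$ and $y\in\bigcup_i S_{t,i}$ with $y\preceq x$, write $x-y=\sum_{k=0}^{p} a_k(t+k)$ with $a_k\geq 0$, and induct on $\sum_k a_k$. In the inductive step, choose any $k$ with $a_k\geq 1$ and set $x':=x-(t+k)$. Since $x\geq y+(t+k)>t+k$ and $\lambda$ is a $(t+k)$-core, Lemma \ref{a-t} gives $x'\in \beta(\lambda)$; and if $x\in S_{t,i}$ then necessarily $i\geq 2$ (because $S_{t,1}\subseteq\{1,\dots,t-1\}$ is too small to contain $x$), so the recursive identity places $x'\in S_{t,i-1}\subseteq\bigcup_i S_{t,i}$. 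Since plainly $y\preceq x'$, the inductive hypothesis gives $y\in\beta(\lambda)$, which is what was needed.

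For the direction $(\Leftarrow)$, suppose $\beta(\lambda)$ is a good subset of $\bigcup_i S_{t,i}$. To check the hypothesis of Lemma \ref{a-t} for each $0\leq k\leq p$, I would take any $x\in \beta(\lambda)$ with $x\geq t+k$. Then $x\in S_{t,i}$ for some $i\geq 2$ (again since $x\geq t$), and the recursive identity yields $x-(t+k)\in S_{t,i-1}\subseteq\bigcup_i S_{t,i}$. Because $x-(t+k)\preceq x$, goodness gives $x-(t+k)\in \beta(\lambda)$, so $\lambda$ is $(t+k)$-core by Lemma \ref{a-t}; running this over $k=0,1,\dots,p$ shows $\lambda$ is a $(t,t+1,\dots,t+p)$-core.

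The only thing requiring care is ensuring that after subtracting $t+k$ the result stays inside $\bigcup_i S_{t,i}$ rather than falling into one of the forbidden blocks $\{it,\dots,i(t+p)\}$ that separate consecutive $S_{t,i}$'s; this is precisely what the recursive identity encapsulates, which is why the author labels the conclusion as ``obvious.'' Once that identity is taken for granted the argument is essentially mechanical, with no non-routine step.
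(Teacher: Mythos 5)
Your proposal is correct and follows exactly the route the paper intends: the paper declares the lemma ``obvious'' from Lemma \ref{a-t}, Lemma \ref{set1} and the identity $S_{t,i}=\{x-(t+k):x\in S_{t,i+1},\ 0\leq k\leq p\}$, and your argument simply makes those routine deductions explicit (induction on $\sum_k a_k$ for goodness, and the observation that $x\geq t$ forces $x\in S_{t,i}$ with $i\geq 2$ so that subtracting $t+k$ stays in $S_{t,i-1}$). No gaps; this is the same approach, written out in full.
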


 Let $R_{t,j}$ be the set of $ (t, t +
1, \ldots, t + p)$-core partitions whose
 $\beta$-sets contain every positive integer smaller than $j$ but don't contain
 $j$. Let $r_{t,j}=\#R_{t,j}$ be the number of elements in
 $R_{t,j}$.

 Now we can give the main result in this paper.

\begin{thm} \label{main}
Suppose that $p$ is a given positive integer. The number $f_t$ of
$(t, t + 1, \ldots, t + p)$-core partitions is computed recursively
by
$$f_t=0\ \text{for}\ t<0;\ f_0 = 1; \  f_t = \sum_{ i=1}^{p-1}f_{t-i} + \sum_{j=0}^{t-p}
f_jf_{t-p-j} \ \text{for}\ t\geq 1.$$ The generating function of
$f_t$ is $$\sum \limits_{t\geq 0}f_tx^t=\frac{1-\sum_{1\leq i\leq
p-1}x^i-\sqrt{(1-\sum_{1\leq i\leq p-1}x^i)^2-4x^p}}{2x^p}.$$
\end{thm}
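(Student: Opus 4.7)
The plan is to prove the stated recursion combinatorially by classifying each $(t, t+1, \ldots, t+p)$-core partition $\lambda$ according to the smallest positive integer $j$ missing from its $\beta$-set. By Lemma~\ref{set1} we have $\beta(\lambda) \subseteq \bigcup_i S_{t,i}$, so $t \notin \beta(\lambda)$ and $j \in \{1, \ldots, t\}$; hence $f_t = \sum_{j=1}^t r_{t,j}$. The heart of the argument is then to prove
\[ r_{t,j} = f_{t-j} \ \text{for}\ 1 \leq j \leq p-1, \qquad r_{t,j} = f_{j-p}\, f_{t-j} \ \text{for}\ p \leq j \leq t, \]
from which the recursion $f_t = \sum_{i=1}^{p-1} f_{t-i} + \sum_{k=0}^{t-p} f_k f_{t-p-k}$ follows by setting $k = j-p$ in the second sum.

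The key technical step is a bijective decomposition of $\beta(\lambda)$ for $\lambda \in R_{t,j}$. I would coordinatize $\bigcup_i S_{t,i}$ by writing each element as $x = (i-1)(t+p) + r$, so that the good-subset criterion of Lemma~\ref{num} reads: the position $(i+1, r)$ lies in $\beta(\lambda)$ only if all $p+1$ positions $(i, r), (i, r+1), \ldots, (i, r+p)$ lie in $\beta(\lambda)$. The hypothesis that $j$ is the smallest missing integer means $(1, j) \notin \beta(\lambda)$, and cascading this upward shows by induction that the ``middle interval'' of level-$i$ positions, namely $\{j-(i-1)p, \ldots, j\}$ intersected with the valid range $\{1, \ldots, t - 1 - (i-1)p\}$, is entirely excluded from $\beta(\lambda)$. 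The remaining allowed positions at level $i$ split into a left block $A_i = \{1, \ldots, j-(i-1)p-1\}$ and a right block $B_i = \{j+1, \ldots, t-1-(i-1)p\}$. A short index check confirms that the predecessors of any position in $A_{i+1}$ all lie in $A_i$, and those of any position in $B_{i+1}$ all lie in $B_i$, so the choices on the two sides are independent. Matching sizes level by level, the family $\{A_i\}_{i\geq 2}$ with its cascading rule is isomorphic to the $\beta$-set structure of a $(j-p, j-p+1, \ldots, j)$-core $\mu$ (via $x = i(t+p)+r \mapsto (i-1)j + r$), while $\{B_i\}_{i\geq 1}$ is isomorphic to that of a $(t-j, t-j+1, \ldots, t-j+p)$-core $\nu$ (via $x = (i-1)(t+p)+r \mapsto (i-1)(t-j+p) + (r-j)$). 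Since $A_1 = \{1, \ldots, j-1\}$ is forced into $\beta(\lambda)$ and so carries no data, and each $A_i$ with $i \geq 2$ is empty when $j \leq p-1$, the bijection produces $r_{t,j} = f_{t-j}$ in that range and $r_{t,j} = f_{j-p}\, f_{t-j}$ for $j \geq p$.

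Once the recursion holds, multiplying by $x^t$ and summing over $t \geq 1$ yields the functional equation $F(x) = 1 + \bigl(\sum_{i=1}^{p-1} x^i\bigr) F(x) + x^p F(x)^2$, and picking the root of this quadratic with $F(0) = 1$ produces the closed form stated in the theorem. The main obstacle is executing the Block~$A$/Block~$B$ decomposition cleanly: one has to verify the precise shape of the cascaded forbidden interval at each level, that predecessors never cross between the two blocks, and that the shift maps above really send good subsets restricted to each block bijectively onto the good subsets of $\bigcup_i S_{j-p,i}$ and $\bigcup_i S_{t-j,i}$ respectively. The boundary cases $j = p$ and $j = p+1$, where $f_{j-p}$ equals $1$, must then be checked; this is what makes the two branches of the recursion fit together seamlessly.
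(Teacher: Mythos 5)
Your proposal is correct and follows essentially the same route as the paper: classify $(t,\ldots,t+p)$-cores by the smallest element $j$ missing from the $\beta$-set, show the forced-out ``middle'' interval splits the poset of Lemma~\ref{num} into two independent blocks order-isomorphic (via exactly the shift maps the paper calls $\phi$ and $\varphi$) to the posets for $(j-p,\ldots,j)$- and $(t-j,\ldots,t-j+p)$-cores, deduce $r_{t,j}=f_{t-j}$ for $j\leq p-1$ and $r_{t,j}=f_{j-p}f_{t-j}$ for $j\geq p$, and solve the resulting quadratic for the generating function. The only difference is presentational: you treat the two ranges of $j$ uniformly in level--position coordinates, whereas the paper handles them as two separate steps with explicit sets $S'_{t,i}$, $S''_{t,i}$.
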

\begin{proof} For convenience, let $f_t=0\ \text{for}\
t<0$ and $ f_0 = 1$. Now  suppose that $t\geq 1.$ First we know
$r_{t,j}=0$ for $j\geq t+1$  since $t\notin \beta(\lambda)$ and thus
$f_t =\sum_{1\leq j\leq t}r_{t,j}$.

\textbf{Step 1.}   We claim that $r_{t,j}=f_{t-j}$ for $1\leq j\leq
p-1:$

Notice that $r_{t,j}=f_{t-j}=0$ is true if $t+1\leq j\leq p-1$ since
we already assume that $f_t=0\ \text{for}\ t<0$. Now we can assume
that $1\leq j\leq p-1$ and $j\leq t$. Let $\lambda $ be a partition
such that $1,2,\ldots,j-1\in \beta(\lambda)$ and $j\notin
\beta(\lambda).$ If $\lambda\in R_{t,j}$, i.e., $\lambda$ is a $(t,
t + 1, \ldots, t + p)$-core partition, then by Lemma \ref{a-t}, we
have  $ x \notin \beta(\lambda)$ for $i\geq 2$ and $
(i-1)(t+p)+1\leq x \leq (i-1)(t+p)+j $ since $j\notin
\beta(\lambda)$ and $t\leq t+p+1-j\leq t+p$. Let
\begin{eqnarray*}S'_{t,i}&=&       S_{t,i}\setminus \{x\in \textbf{Z}:   (i-1)(t+p)+1\leq x \leq (i-1)(t+p)+j  \}           \\
&=& \{ x\in \textbf{Z} : (i-1)(t+p)+j+1\leq x \leq  it-1
\}.\end{eqnarray*}   Notice that $S'_{t,i}=\emptyset$ when $i>
[\frac{t-j+p-2}{p}]$. Thus it is easy to see that
$$\{ 1,2,\ldots,j-1\}\subseteq \beta(\lambda)\subseteq (\bigcup_{1\leq i \leq [\frac{t-j+p-2}{p}]} S'_{t,i})\bigcup \{
1,2,\ldots,j-1\}$$ if $\lambda\in R_{t,j}$. We can define a  partial
order relation  on $\bigcup_{1\leq i \leq [\frac{t-j+p-2}{p}]}
S'_{t,i}$ induced by the partial order relation $\preceq$ on
$\bigcup_{1\leq i \leq [\frac{t+p-2}{p}]} S_{t,i}$.  That is, for
every two integers $x,y$ in $\bigcup_{1\leq i \leq
[\frac{t-j+p-2}{p}]} S'_{t,i}$, we have $y\preceq x$ if and only if
$x-y=\sum_{0\leq k\leq p}a_k( t+k)$ for some non negative integers
$a_k.$

Let $\lambda'$ be a partition such that
$$\beta(\lambda')=\beta(\lambda)\setminus \{ 1,2,\ldots,j-1\}.$$ By the
 definition of $S'_{t,i}$, we know for $1\leq
i\leq [\frac{t-j+p-2}{p}]-1$,
$$S'_{t,i}=\{x-(t+k):x\in S'_{t,i+1},\ 0\leq k\leq p\}.$$   Then by Lemma \ref{a-t}, it is easy
to see that $\lambda\in R_{t,j}$ if and only if $\lambda'$ is a $(t,
t + 1, \ldots, t + p)$-core partition with $\beta(\lambda')\subseteq
\bigcup_{1\leq i \leq [\frac{t-j+p-2}{p}]} S'_{t,i},$ which is
equivalent to $\beta(\lambda')$ is a good subset of $\bigcup_{1\leq
i \leq [\frac{t-j+p-2}{p}]} S'_{t,i}$.

Notice that $\bigcup_{1\leq i \leq [\frac{t-j+p-2}{p}]} S_{t-j,i}$
is a partially ordered set and for every two integers $x',y'$ in
$\bigcup_{1\leq i \leq [\frac{t-j+p-2}{p}]} S_{t-j,i}$, we know
$y'\preceq x'$ if and only if $x'-y'=\sum_{0\leq k\leq p}a_k(
t-j+k)$ for some non negative integers $a_k.$ Now we can  build a
function
$$\phi: \bigcup_{1\leq i \leq [\frac{t-j+p-2}{p}]}
S'_{t,i}\rightarrow \bigcup_{1\leq i \leq [\frac{t-j+p-2}{p}]}
S_{t-j,i},$$ that is, for every $x\in S'_{t,i}$, let $\phi(x)=x-ij.$
Then it is obvious that $\phi$ is a bijection. Let $x\in S'_{t,i+1}
$ and $y\in S'_{t,i} $. We have $\phi(x)-\phi(y)=x-y-j.$ Thus we
know $t-j\leq \phi(x)-\phi(y) \leq t-j+p$   if and only if $t\leq
x-y \leq t+p$, which implies that $\phi(y)\preceq\phi(x)$ in
$\bigcup_{1\leq i \leq [\frac{t-j+p-2}{p}]} S_{t-j,i}$ if and only
if $y\preceq x$ in $\bigcup_{1\leq i \leq [\frac{t-j+p-2}{p}]}
S'_{t,i}$.  This means that $\phi$ is an isomorphism of partially
ordered sets. Then $\bigcup_{1\leq i \leq [\frac{t-j+p-2}{p}]}
S'_{t,i}$ and $\bigcup_{1\leq i \leq [\frac{t-j+p-2}{p}]} S_{t-j,i}$
has the same number of good subsets and thus   by Lemma \ref{num} we
have $r_{t,j}=f_{t-j}$. We mention that if $j=t\leq p-1$, then
$r_{t,t}=f_{0}=1$ is true  since in this case, we have
$\bigcup_{1\leq i \leq [\frac{t-j+p-2}{p}]} S'_{t,i}=\emptyset$ and
the
 empty subset of a partially ordered set is always a good subset.

\textbf{Step 2.} We claim that  $r_{t,j}=f_{j-p}f_{t-j}$ for $p\leq
j\leq t:$

 Let $\lambda\in R_{t,j}$, i.e.,  $\lambda $ is a $(t, t +
1, \ldots, t + p)$-core partition  such that $1,2,\ldots,j-1\in
\beta(\lambda)$ and $j\notin \beta(\lambda).$ If $i\geq 0$ and $
it+j\leq x \leq i(t+p)+j $, by Lemma \ref{a-t} we have  $ x \notin
\beta(\lambda)$. Let
$$S'_{t,i}=\{ x\in \textbf{Z} : i(t+p)+1\leq x \leq  it+j-1
\}$$ and  $$S''_{t,i}=\{ x\in \textbf{Z} : (i-1)(t+p)+j+1\leq x \leq
it-1 \}.$$ Then $$S'_{t,i}\bigcup S''_{t,i+1}= S_{t,i+1}\setminus
\{x\in \textbf{Z}:   it+j\leq x \leq i(t+p)+j   \}
$$ and
$$S'_{t,0}=\{ 1,2,\ldots,j-1\}\subseteq \beta(\lambda).$$ Notice that $S'_{t,i}=\emptyset$ when $i>
[\frac{j-2}{p}]$ and $S''_{t,i}=\emptyset$ when $i>
[\frac{t-j+p-2}{p}]$. Thus it is easy to see
$$\beta(\lambda)\subseteq
 (\bigcup_{1\leq i \leq [\frac{j-2}{p}]} S'_{t,i})\bigcup (\bigcup_{1\leq i \leq
[\frac{t-j+p-2}{p}]} S''_{t,i})\bigcup S'_{t,0}.$$   We can define
partial order relations  on $\bigcup_{1\leq i \leq [\frac{j-2}{p}]}
S'_{t,i}$ and $\bigcup_{1\leq i \leq [\frac{t-j+p-2}{p}]} S''_{t,i}$
induced by the partial order relation $\preceq$ on $\bigcup_{1\leq i
\leq [\frac{t+p-2}{p}]} S_{t,i}$ as in Step $1$.

Let $\lambda'$ be the partition such that
$$\beta(\lambda')=(\beta(\lambda)\bigcap (\bigcup_{1\leq i \leq
[\frac{j-2}{p}]} S'_{t,i}))\bigcup S'_{t,0}$$ and $\lambda''$ be the
partition such that $$\beta(\lambda'')=\beta(\lambda)\bigcap
(\bigcup_{1\leq i \leq [\frac{t-j+p-2}{p}]} S''_{t,i}).$$   By the
definition of $S'_{t,i}$ and $S''_{t,i}$, we know
$$S'_{t,i}=\{x-(t+k):x\in S'_{t,i+1},\ 0\leq k\leq p\}$$ for $0\leq
i\leq [\frac{j-2}{p}]-1$ and $$S''_{t,i}=\{x-(t+k):x\in
S''_{t,i+1},\ 0\leq k\leq p\}$$ for $1\leq i\leq
[\frac{t-j+p-2}{p}]-1$.
 Then by Lemma \ref{a-t} it is
easy to see that $\lambda'$ and $\lambda''$ are $(t, t + 1, \ldots,
t + p)$-core partitions since $\lambda$ is a $(t, t + 1, \ldots, t +
p)$-core partition. On the other hand, if $\lambda'$ and $\lambda''$
are $(t, t + 1, \ldots, t + p)$-core partitions such that
$$S'_{t,0}\subseteq \beta(\lambda')\subseteq (\bigcup_{1\leq i \leq [\frac{j-2}{p}]}
S'_{t,i})\bigcup S'_{t,0}$$ and $$\beta(\lambda'')\subseteq
\bigcup_{1\leq i \leq [\frac{t-j+p-2}{p}]} S''_{t,i},$$ by Lemma
\ref{a-t} we can reconstruct the $(t, t + 1, \ldots, t + p)$-core
partition $\lambda\in R_{t,j}$ by letting
$$\beta(\lambda)=\beta(\lambda')\bigcup\beta(\lambda''),$$
 which implies that $$(\beta(\lambda)\bigcap (\bigcup_{1\leq i \leq
[\frac{j-2}{p}]} S'_{t,i}))\bigcup S'_{t,0}=\beta(\lambda')$$ and
$$\beta(\lambda)\bigcap (\bigcup_{1\leq i \leq [\frac{t-j+p-2}{p}]}
S''_{t,i})=\beta(\lambda'').$$  Thus the number of $(t, t + 1,
\ldots, t + p)$-core partitions in $R_{t,j}$ equals to the number of
pairs $(\lambda',\ \lambda'')$ such that $\lambda'$ and $\lambda''$
are $(t, t + 1, \ldots, t + p)$-core partitions, $S'_{t,0}\subseteq
\beta(\lambda')\subseteq (\bigcup_{1\leq i \leq [\frac{j-2}{p}]}
S'_{t,i})\bigcup S'_{t,0}$, and $\beta(\lambda'')\subseteq
\bigcup_{1\leq i \leq [\frac{t-j+p-2}{p}]} S''_{t,i}$,  which equals
to the product of the number of good subsets of $\bigcup_{1\leq i
\leq [\frac{j-2}{p}]} S'_{t,i}$ and  the number of good subsets of
$\bigcup_{1\leq i \leq [\frac{t-j+p-2}{p}]} S''_{t,i}$  by Lemma
\ref{a-t}.

First we compute the number of good subsets of $\bigcup_{1\leq i
\leq [\frac{j-2}{p}]} S'_{t,i}.$ Notice that  for every two integers
$x',y'$ in $\bigcup_{1\leq i \leq [\frac{j-2}{p}]} S_{j-p,i}$, we
have $y'\preceq x'$ if and only if $x'-y'=\sum_{0\leq k\leq p}a_k(
j-p+k)$ for some non negative integers $a_k.$  We define a function
  $$\phi: \bigcup_{1\leq i \leq
[\frac{j-2}{p}]} S'_{t,i}\rightarrow \bigcup_{1\leq i \leq
[\frac{j-2}{p}]} S_{j-p,i}$$ such that  for every $x\in S'_{t,i}$,
let $\phi(x)=x-i(t+p-j)-j.$ Then it is easy to see that $\phi$ is a
bijection. Let $x\in S'_{t,i+1} $ and $y\in S'_{t,i} $. We have
$\phi(x)-\phi(y)=x-y-(t+p-j).$ Thus $\phi(y)\preceq\phi(x)$ if and
only if $y\preceq x$ since both of them are equivalent to $ t\leq
x-y \leq t+p$. This means that $\phi$ is an isomorphism of partially
ordered sets. Then  $\bigcup_{1\leq i \leq [\frac{j-2}{p}]}
S'_{t,i}$ and $\bigcup_{1\leq i \leq [\frac{j-2}{p}]} S_{j-p,i}$ has
the same number of good subsets, which equals to $f_{j-p}$  by Lemma
\ref{num}. We mention that $\bigcup_{1\leq i \leq [\frac{j-2}{p}]}
S'_{t,i}$ has $f_{j-p}$  good subsets is  true for $j-p=0$ since the
 empty subset of a partially ordered set is always a good subset and we already assume that
 $f_0=1$.

Next we compute the number of good subsets of $\bigcup_{1\leq i \leq
[\frac{t-j+p-2}{p}]} S''_{t,i}.$    Notice that  for every two
integers $x',y'$ in $\bigcup_{1\leq i \leq [\frac{t-j+p-2}{p}]}
S_{t-j,i}$, we have $y'\preceq x'$ if and only if $x'-y'=\sum_{0\leq
k\leq p}a_k( t-j+k)$ for some non negative integers $a_k.$      We
define a function
  $$\varphi: \bigcup_{1\leq i \leq
[\frac{t-j+p-2}{p}]} S''_{t,i}\rightarrow \bigcup_{1\leq i \leq
[\frac{t-j+p-2}{p}]} S_{t-j,i}$$ such that  for every $x\in
S''_{t,i}$, let $\varphi(x)=x-ij.$ Then it is easy to see that
$\varphi$ is a bijection. Let $x\in S''_{t,i+1} $ and $y\in
S''_{t,i} $. We have $\varphi(x)-\varphi(y)=x-y-j.$ Thus
$\varphi(y)\preceq\varphi(x)$ if and only if $y\preceq x$ since both
of them are equivalent to $ t\leq x-y \leq t+p$. This means that
$\varphi$ is an isomorphism of partially ordered sets. Then
$\bigcup_{1\leq i \leq [\frac{t-j+p-2}{p}]} S''_{t,i}$ and
$\bigcup_{1\leq i \leq [\frac{t-j+p-2}{p}]} S_{t-j,i}$ has the same
number of good subsets, which equals to $f_{t-j}$  by Lemma
\ref{num}. We mention that $\bigcup_{1\leq i \leq
[\frac{t-j+p-2}{p}]} S''_{t,i}$ has $f_{t-j}$  good subsets is  true
for $t-j=0$ since the
 empty subset of a partially ordered set is always a good subset and we already assume that
 $f_0=1$.

Now we have $r_{t,j}=f_{j-p}f_{t-j}$ for $p\leq j\leq t$  by Lemma
\ref{num} and prove the claim.

\textbf{Step 3.} Put Step 1 and Step 2 together, we have $$f_t
=\sum_{j=1}^{t}r_{t,j}=\sum_{ i=1}^{ p-1}f_{t-i} + \sum_{j=p}^{t}
f_{j-p}f_{t-j}=\sum_{ i=1}^{ p-1}f_{t-i} + \sum_{j=0}^{t-p}
f_jf_{t-p-j}$$ for $t\geq 1$.

 Let $F(x)=\sum_{t\geq 0}f_tx^t$ be the generating
function of $f_t$. Then  we have
\begin{eqnarray*}
F(x)-1&=&\sum_{t\geq 1}f_tx^t=\sum_{t\geq 1}(\sum_{ i=1}^{
p-1}f_{t-i} + \sum_{j=0}^{t-p} f_jf_{t-p-j})x^t\\&=& \sum_{ i=1}^{
p-1}\sum_{t\geq 1}f_{t-i}x^t + \sum_{t\geq 1}\sum_{j=0}^{t-p}
f_jf_{t-p-j}x^t\\&=& \sum_{ i=1}^{ p-1}x^iF(x)+x^p(F(x))^2.
\end{eqnarray*}

Then $F(x)=\frac{1-\sum_{1\leq i\leq p-1}x^i-\sqrt{(1-\sum_{1\leq
i\leq p-1}x^i)^2-4x^p}}{2x^p}$. We finish the proof.   \end{proof}


\vspace{1ex}

Suppose that $p=1$ in Theorem \ref{main}. We can give a new proof of
Anderson's result  on the number of $(t_1,t_2)$-core partitions in
\cite{and}  for the case $t_2=t_1+1$.

\begin{cor} \label{cor1}
  The number $f_t$ of $(t, t + 1)$-core partitions is  $f_t = \frac{1}{2t+1}\binom{2t+1}{t}$. The generating function of $f_t$ is
$$\sum \limits_{t\geq 0}f_tx^t= \frac{1-\sqrt{1-4x}}{2x}.$$
\end{cor}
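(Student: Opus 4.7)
The plan is to obtain this corollary as a direct specialization of Theorem \ref{main} to the case $p=1$, together with the identification of the resulting power series as the Catalan generating function. Since Theorem \ref{main} is already available, essentially no new combinatorial argument is needed; only some elementary algebraic simplification.

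First I would substitute $p=1$ into the formula of Theorem \ref{main}. The empty sum $\sum_{1\leq i\leq p-1}x^i$ becomes $0$, so the generating function collapses to
$$\sum_{t\geq 0} f_t x^t = \frac{1-\sqrt{1-4x}}{2x},$$
which is precisely the second assertion of the corollary. I would likewise specialize the recursion from Theorem \ref{main} to $f_0=1$ and
$$f_t = \sum_{j=0}^{t-1} f_j\, f_{t-1-j} \quad (t\geq 1),$$
since the $\sum_{i=1}^{p-1}f_{t-i}$ piece disappears when $p=1$.

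Next I would recognize both the generating function and the recursion as the standard characterizations of the Catalan numbers $C_t$. Hence $f_t = C_t$, and the corollary's closed form follows from the elementary identity
$$C_t = \frac{1}{t+1}\binom{2t}{t} = \frac{(2t)!}{t!\,(t+1)!} = \frac{1}{2t+1}\binom{2t+1}{t},$$
which one checks by multiplying numerator and denominator of the middle expression by $2t+1$. This completes the proof.

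There is no real obstacle: the only thing to verify carefully is that the empty-sum convention applied to $\sum_{1\leq i\leq 0}x^i$ yields $0$ in the formula of Theorem \ref{main}, so that no spurious terms survive. Once that is noted, the corollary is immediate from Theorem \ref{main} and a standard Catalan-number identity.
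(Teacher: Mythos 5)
Your proposal is correct and follows essentially the same route as the paper: specialize Theorem \ref{main} to $p=1$ (the sum $\sum_{1\leq i\leq p-1}x^i$ being empty), recognize the resulting series $\frac{1-\sqrt{1-4x}}{2x}$ as the Catalan generating function, and conclude via the identity $\frac{1}{t+1}\binom{2t}{t}=\frac{1}{2t+1}\binom{2t+1}{t}$. The only difference is that you also spell out the specialized recursion and the verification of the binomial identity, which the paper leaves implicit.
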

\begin{proof} Let $p=1$ in Theorem \ref{main}. We have the generating
function of $f_t$ is
$$\sum \limits_{t\geq 0}f_tx^t= \frac{1-\sqrt{1-4x}}{2x}.$$ This is the generating function of Catalan numbers.  Then it
is easy to see that $f_t = \frac{1}{t+1}\binom{2t}{t}=
\frac{1}{2t+1}\binom{2t+1}{t}$.  \end{proof}

\vspace{1ex}

Suppose that $p=2$ in Theorem \ref{main}. Then it is easy to see
that Theorem \ref{main2} is a direct corollary of Theorem
\ref{main}.

\noindent\textbf{Proof of Theorem \ref{main2}.}  Let $p=2$ in
Theorem \ref{main}. We have the generating function of $f_t$ is
$$\sum_{t\geq 0}f_tx^t=\frac{1-x-\sqrt{1-2x-3x^2}}{2x^2}.$$ By $A001006$ in \cite{sloane} we
know this is the generating function of Motzkin numbers. It is
well-known that  Bernhart \cite{bern} proved that the $t-$th Motzkin
number equals to $\sum_{0\leq k\leq [\frac{t}{2}]
}\frac{1}{k+1}\binom{t}{2k}\binom{2k}{k}$. We finish the proof.
\hfill $\square$


\section{Acknowledgements}
The author appreciates  Prof. P. O. Dehaye's   encouragement and
help. I would also like to thank Prof. C. Krattenthaler for the
useful comments and thank Prof. W. J. Keith and Prof. R. Nath for
making me aware of \cite{KN}. The author is supported  by
Forschungskredit of the University of Zurich, grant no. [FK-14-093].


\begin{thebibliography}{1}





\bibitem{tamd}
T. Amdeberhan, Theorems, problems and conjectures, $2013.$ Published
electronically at
\emph{www.math.tulane.edu$/\sim$tamdeberhan$/$conjectures.pdf}.

\bibitem{and}
J. Anderson, Partitions which are simultaneously $t_1$- and
$t_2$-core, Disc. Math. $248(2002),\ 237-243$.




\bibitem{berge} C. Berge, Principles of combinatorics, Mathematics in Science and Engineering Vol. $72$, Academic
Press, New York, $1971$.

\bibitem{bern}
F. R. Bernhart, Catalan, Motzkin, and Riordan numbers, Discrete
Math. $204 (1999),\  73-112.$




\bibitem{ford}
B. Ford, H. Mai, and L. Sze, Self-conjugate simultaneous $p$- and
$q$-core partitions and blocks of $A_n$, J. Number Theory
$129(4)(2009),\ 858-865.$

\bibitem{stanton2}
F. Garvan, D. Kim, and D. Stanton, Cranks and $t$-cores, Inv. Math.
$101 (1990),\ 1-17.$

\bibitem{gran}
A. Granville and K. Ono, Defect zero $p$-blocks for finite simple
groups, Trans. Amer. Math. Soc. $348 (1996),\ 331-347.$





\bibitem{jk}
G. James, A. Kerber, The representation theory of the symmetric
group, Addison-Wesley Publishing Company, Reading, MA, $1981$.


\bibitem{KN}
W. J. Keith and R. Nath,  Partitions with prescribed hooksets, J.
Comb. Number Theory $3(1) (2011),\  39-50$.


\bibitem{ols}
J. Olsson and D. Stanton, Block inclusions and cores of partitions,
Aequationes Math. $74(1-2)(2007),\ 90-110$.

\bibitem{sloane}
 N. J. A. Sloane, The On-Line Encyclopedia of
Integer Sequences, $2014.$ Published electronically at
\emph{https$://$oeis.org}.

\bibitem{stanley}
R. P. Stanley and F. Zanello, The Catalan case of Armstrong's
conjectures on simultaneous core partitions, arXiv$:1312.4352$.

\bibitem{stanton}
D. Stanton, Open positivity conjectures for integer partitions,
Trends Math. $2 (1999),\ 19 - 25$.
















\end{thebibliography}
\end{document}